\newtheorem{cor}{Corollary}[section]
\newtheorem{example}{Example}
\newtheorem{rem}{Remark}[section]
\title{On the correction equation of the Jacobi-Davidson method}
\author{Gang Wu\thanks{Department of Mathematics,
China University of Mining and Technology \& School of Mathematics and Statistics, Jiangsu Normal University, Xuzhou, 221116, Jiangsu, P.R. China.
E-mail: {\tt wugangzy@gmail.com} and {\tt gangwu76@126.com}. This author is
supported by the National Science Foundation of China under grant 11371176, the National Science Foundation of Jiangsu
Province under grant BK20131126, the 333 Project of Jiangsu Province, and the Talent Introduction Program of China
University of Mining and Technology.}
        \and Hong-kui Pang\thanks{Corresponding author. School of Mathematics and Statistics, Jiangsu Normal University, Xuzhou, 221116,
        Jiangsu, P.R. China. E-mail: {\tt panghongkui@163.com}. This author is supported by the National
Science Foundation of China under grant 11201192, and the
Natural Science Foundation of Jiangsu Province under grant
BK2012577.}
}
\begin{document}

\maketitle


\begin{abstract}
The Jacobi-Davidson method is one of the most popular approaches for iteratively computing a few eigenvalues and their associated eigenvectors of a large matrix.
The key of this method is to expand the search subspace via solving the Jacobi-Davidson correction equation, whose coefficient matrix is singular.
It is believed long by scholars that the Jacobi-Davidson correction equation is a consistent linear system. In this work, we point out that the correction equation may have a unique solution or have no solution at all, and we derive a computable necessary and sufficient condition for cheaply judging the existence and uniqueness of solution of the correction equation. Furthermore, we consider the difficulty of stagnation that bothers the Jacobi-Davidson method, and verify that if the Jacobi-Davidson method stagnates, then the corresponding Ritz value is a defective eigenvalue of the projection matrix. We provide a computable necessary and sufficient condition for expanding the search subspace successfully. The properties of the Jacobi-Davidson method with preconditioning and some alternative Jacobi-Davidson correction equations are also discussed.
\end{abstract}

\begin{keywords}
Large eigenproblem, Jacobi-Davidson method, Jacobi-Davidson correction equation, Stagnation, Defective eigenvalue.
\end{keywords}

\begin{AMS}
65F15, 65F10.
\end{AMS}

\pagestyle{myheadings} \thispagestyle{plain} \markboth{G. WU AND H. PANG}{On correction equation of the Jacobi-Davidson method}

\section{Introduction}
\setcounter{equation}{0}
We are interested in computing a few eigenvalues and the corresponding eigenvectors of
an $n$-by-$n$ large matrix $A$. The Jacobi-Davidson method
is one of the most popular approaches for this type of problem, see \cite{Bai,Fro,GS,Huang,Jia,Mee,JD2,JD,S1,S2,Stewart,Van} and the references therein. In essence, this method can be understood as a Newton-based method \cite{Stewart}.
The Jacobi-Davidson method is based on two principles \cite{JD2,JD}: Given a subspace $V_k=[{\bf v}_1,{\bf v}_2,\ldots,{\bf v}_k]$ with $k\ll n$, the first principle is to apply
a Ritz-Galerkin approach on the large eigenproblem $A{\bf x}=\lambda {\bf x}$:
$$
AV_k{\bf y}_k-\widetilde{\lambda}_k V_k{\bf y}_k~\bot~{\rm span}\{V_k\},
$$
which reduces to a $k$-by-$k$ eigenproblem
$$
V_k^{\rm H}AV_k{\bf y}_k=\widetilde{\lambda}_k{\bf y}_k.
$$
Then this method makes use of $(\widetilde{\lambda}_k,{\bf u}_k=V_k{\bf y}_k)$ as an approximate eigenpair, called Ritz pair of $A$ in the subspace spanned by the columns of $V_k$.

The second principle is to modify the approximation from solving the Jacobi-Davidson correction equation for expanding ${\rm span}\{V_k\}$. More precisely,
for the approximate eigenvector ${\bf u}_k$, the Jacobi-Davidson method computes an {\it orthogonal} correction {\bf v} for ${\bf u}_k$, such that
$$
A({\bf u}_k+{\bf v})=\lambda({\bf u}_k+{\bf v}).
$$
As ${\bf v}\bot{\bf u}_k$, we focus on the subspace orthogonal to ${\bf u}_k$.
Let ${\bf r}_k=(A-\widetilde{\lambda}_k I){\bf u}_k$ be the residual, then ${\bf r}_k\bot{\bf u}_k$, and the orthogonal projection of $A$ onto the subspace ${\rm span}\{{\bf u_k}\}^{\bot}$ is $(I-{\bf u}_k{\bf u}_k^{\rm H})A(I-{\bf u}_k{\bf u}_k^{\rm H})$, where $(\cdot)^{\rm H}$ denotes the conjugate transpose of a matrix or vector. It is easy to check that the vector ${\bf v}$ satisfies
$$
(I-{\bf u}_k{\bf u}_k^{\rm H})(A-\lambda I)(I-{\bf u}_k{\bf u}_k^{\rm H}){\bf v}=-{\bf r}_k.
$$
Since the eigenvalue $\lambda$ is unknown, we replace it by $\widetilde{\lambda}_k$, which gives the following {\it Jacobi-Davidson correction equation} \cite{JD2,JD}:
\begin{equation}\label{1.1}
(I-{\bf u}_k{\bf u}_k^{\rm H})(A-\widetilde{\lambda}_k I)(I-{\bf u}_k{\bf u}_k^{\rm H}){\bf v}=-{\bf r}_k.
\end{equation}
Throughout this paper, we make the assumption that ${\bf v}\in {\rm span}\{{\bf u}_k\}^{\bot}$ and $\widetilde{\lambda}_k$ is not an eigenvalue of $A$.
If {\rm(\ref{1.1})} has a unique solution, then one solves the equation (\ref{1.1}) and expands the search subspace ${\rm span}\{V_k\}$ with ${\bf v}$, yielding a new subspace ${\rm span}\{V_{k+1}\}$. One then computes a new Ritz pair
$(\widetilde{\lambda}_{k+1},{\bf u}_{k+1})$ with respect to $V_{k+1}$, and repeats the above procedure. This is the basis of the Jacobi-Davidson method \cite{JD2,JD};
for more details and implementations on this method, refer to \cite{Bai,JD2,JD,Stewart,Van}.

In \cite{GS}, Genseberger and Sleijpen proposed an {\it alternative} correction equation for the Jacobi-Davidson method. Consider some subspace $\mathcal{W}$ such that ${\bf u}_k\in\mathcal{W}\subseteq {\rm span}\{V_k\}$. Let $W$ be an orthonormal basis for $\mathcal{W}$, the alternative correction equation reads \cite{GS}
\begin{equation}\label{eqn23}
(I-WW^{\rm H})(A-\widetilde{\lambda}_k I)(I-WW^{\rm H}){\bf v}=-{\bf r}_k.
\end{equation}
In this situation, we have ${\bf v}\in {\rm span}\{W\}^{\bot}$ and ${\bf r}_k\perp {\rm span}\{V_k\}$, where ${\rm span}\{W\}^{\bot}$ represents the orthogonal complement of ${\rm span}\{W\}$. Obviously, if $\mathcal{W}={\rm span}\{{\bf u}_k\}$, then (\ref{eqn23}) reduces to the Jacobi-Davidson correction equation (\ref{1.1}). On the other hand,
if we choose $\mathcal{W}={\rm span}\{V_k\}$, then (\ref{eqn23}) reads
\begin{equation}\label{eqn24}
(I-V_kV_k^{\rm H})(A-\widetilde{\lambda}_k I)(I-V_kV_k^{\rm H}){\bf v}=-{\bf r}_k.
\end{equation}

Another alternative is the two-sided Jacobi-Davidson method inspired by the two-sided Rayleigh quotient iteration \cite{TS}. In the two-sided Jacobi-Davidson method, we work with two
search spaces, $\mathcal{Q}$ for the right subspace and $\mathcal{P}$ for the left subspace. Then the two-sided Jacobi-Davidson method applies the Petrov-Galerkin conditions \cite{Stewart} on
the right residual ${\bf r}_{q}$ and left residual ${\bf r}_p$ to determine approximate eigenvectors ${\bf q}$ and ${\bf p}$, where
$$
{\bf r}_q=A{\bf q}-\theta{\bf q}~\bot~\mathcal{P},\quad {\bf r}_p=A^{\rm H}{\bf p}-\bar{\theta}{\bf p}~\bot~\mathcal{Q},
$$
${\bf q}\in\mathcal{Q}$ and ${\bf p}\in\mathcal{P}$ are approximations to the
right and left eigenvectors, ${\bf q}^{\rm H}{\bf p}\neq 0$, and $\theta=\frac{{\bf q}^{\rm H}A{\bf p}}{{\bf q}^{\rm H}{\bf p}}$ is an approximation to the eigenvalue with $\bar{\theta}$ being the conjugate of $\theta$. Let $Q$ and $P$ form bases for $\mathcal{Q}$ and $\mathcal{P}$, respectively, Hochstenbach and Sleijpen considered two variants of the Jacobi-Davidson method. In the first variant, we want the columns of $Q$ and $P$ to be {\it bi-orthogonal}, i.e., $Q^{\rm H}P$ is a diagonal matrix. Then the {\it right} correction equation reads
\begin{equation}\label{114}
\left(I-\frac{{\bf q}\cdot{\bf p}^{\rm H}}{{\bf p}^{\rm H}{\bf q}}\right)(A-\theta I)\left(I-\frac{{\bf q}\cdot{\bf p}^{\rm H}}{{\bf p}^{\rm H}{\bf q}}\right){\bf s}=-{\bf r}_q,\quad {\bf s}\bot{\bf p},
\end{equation}
and the {\it left} correction equation reads
\begin{equation}\label{115}
\left(I-\frac{{\bf p}\cdot{\bf q}^{\rm H}}{{\bf q}^{\rm H}{\bf p}}\right)(A^{\rm H}-\bar{\theta} I)\left(I-\frac{{\bf p}\cdot{\bf q}^{\rm H}}{{\bf q}^{\rm H}{\bf p}}\right){\bf t}=-{\bf r}_p,\quad {\bf t}\bot{\bf q}.
\end{equation}
Without lose of generality, we assume that ${\bf q}^{\rm H}{\bf p}=1$ and $Q,P$ are {\it bi-orthonormal}, i.e, $Q^{\rm H}P=P^{\rm H}Q=I$, then (\ref{114}) and (\ref{115}) can be rewritten as
\begin{equation}\label{1.4}
(I-{\bf q}{\bf p}^{\rm H})(A-\theta I)(I-{\bf q}{\bf p}^{\rm H}){\bf s}=-{\bf r}_q,\quad {\bf s}\bot{\bf p},
\end{equation}
and
\begin{equation}\label{1.5}
(I-{\bf p}{\bf q}^{\rm H})(A^{\rm H}-\bar{\theta} I)(I-{\bf p}{\bf q}^{\rm H}){\bf t}=-{\bf r}_p,\quad {\bf t}\bot{\bf q},
\end{equation}
respectively.

In the second variant, we keep the columns of both $Q$ and $P$ {\it orthogonal}, and the the two correction equations now take the form
\begin{equation}\label{1.6}
\left(I-\frac{{\bf q}\cdot{\bf p}^{\rm H}}{{\bf p}^{\rm H}{\bf q}}\right)(A-\theta I)(I-{\bf q}{\bf q}^{\rm H}){\bf s}=-{\bf r}_q,\quad {\bf s}\bot{\bf q},
\end{equation}
and
\begin{equation}\label{1.7}
\left(I-\frac{{\bf p}\cdot{\bf q}^{\rm H}}{{\bf q}^{\rm H}{\bf p}}\right)(A^{\rm H}-\bar{\theta} I)(I-{\bf p}{\bf p}^{\rm H}){\bf t}=-{\bf r}_p,\quad {\bf t}\bot{\bf p}.
\end{equation}

Although there is a large amount of work has been performed on the Jacobi-Davidson method since the 1990's, there is still some problems needs to be resolved. For instance, it is believed that (\ref{1.1}) is a consistent linear equation (\cite[pp.89]{Bai},\cite[pp.399]{Stewart}), and ${\bf v}$ has a unique solution.
In this paper, we show that the Jacobi-Davidson correction equation (\ref{1.1}) may have a unique solution or have no solution. Thus, its necessary to investigate the existence and uniqueness of the solutions of the correction equations (\ref{1.1})--(\ref{1.7}).
Another problem that bothers the Jacobi-Davidson method is stagnation \cite{GS}, i.e., one may fail to expand the search subspace even if the correction equation has a unique solution. To our best knowledge, however, there is few theoretical results on this topic, and more theoretical analysis is required.

This paper is organized as follows. In Section 2, we investigate the existence and uniqueness of the solutions of the correction equations (\ref{1.1})--(\ref{1.7}). We conclude that the Jacobi-Davidson correction equation (\ref{1.1}) and the two-sided Jacobi-Davidson correction equations (\ref{1.4})--(\ref{1.7}) may have a unique solution or have no solution. We
show that the alternative correction equation (\ref{eqn23}) may have a unique solution, infinite number of solutions or no solution. Furthermore, we provide {computable} necessary and sufficient conditions for cheaply judging the existence and uniqueness of the solutions of the correction equations. In Section 3, we take into account the problem of stagnation, and indicate that the Jacabi-Davidson method based on (\ref{1.1})--(\ref{1.7}) other than (\ref{eqn24}) may suffer from stagnation. It is shown that the Ritz value $\widetilde{\lambda}_k$ being defective is a sufficient condition for the stagnation of the Jacobi-Davidson method.
Some computable necessary and sufficient conditions for expanding the search subspace successfully are proposed.

\section{Existence and uniqueness of the solutions}
\setcounter{equation}{0}


As is well known that the Jacobi-Davidson correction equation is the key ingredient of the Jacobi-Davidson method \cite{JD2,JD},
however, the following example shows that it can be inconsistent.
\begin{example}
Consider the $3\times 3$ matrix
$$
A=\left[\begin{array}{ccc}
1 & 1 & 1\\ 1 & 1 & 1\\ 0 & 0 & 2
\\\end{array} \right].
$$
If we choose $V_1=\big[1,0,0\big]^{\rm H}={\bf u}_1$, then $\widetilde{\lambda}_1={\bf u}_1^{\rm H}A{\bf u}_1=1$,
${\bf r}_1=A{\bf u}_1-{\bf u}_1=\big[0,1,0\big]^{\rm H}~\bot~{\bf u}_1$, and
$$
A-\widetilde{\lambda}_1I=
\left[\begin{array}{ccc}
0 & 1 & 1\\
1 & 0 & 1\\
0 & 0 & 1\\
\end{array} \right],
\quad I-{\bf u}_1{\bf u}_1^{\rm H}=\left[\begin{array}{ccc}
0 & 0 & 0\\
0 & 1 & 0\\
0 & 0 & 1\\
\end{array} \right].
$$
Denote ${\bf v}=[\bar{v}_1,\bar{v}_2,\bar{v}_3]^{\rm H}$, then the Jacobi-Davidson correction equation {\rm(\ref{1.1})} turns out to be
$$
\left[\begin{array}{ccc}
0 & 0 & 0\\
0 & 0 & 1\\
0 & 0 & 1\\
\end{array} \right]\left[\begin{array}{ccc}
v_1\\
v_2\\
v_3\\
\end{array} \right]=\left[\begin{array}{ccc}
0\\
-1\\
0\\
\end{array} \right],
$$
which has no solution. Thus, the Jacobi-Davidson correction equation is inconsistent, and we will fail to expand the search subspace by solving the Jacobi-Davidson correction equation.
\end{example}

We remark that the inconsistent problem of (\ref{1.1}) has also been mentioned by Feng and Jia \cite{Feng}. They provided the following properties on the solution of correction
equation.
\begin{theorem}\cite[Theorem 1]{Feng}\label{Thm11}
Assume that $(\rho,{\bf u})$ is an approximate eigenpair of the matrix $A$ with ${\bf u}\in {\rm span}\{V_k\}$ and $\rho={\bf u}^{\rm H}A{\bf u}$, and select a matrix $U_{\bot}$ such that $[{\bf u},~U_{\bot}]$ is unitary. Then the columns of $U_{\bot}$ form
an orthonormal basis of ${\rm span}\{\bf u\}^{\bot}$. Set ${\bf r} = (A - \rho I){\bf u}$. Then ${\bf r}\bot{\bf u}$, and there exists a unique ${\bf b}$
such that ${\bf r} = U_{\bot}{\bf b}$. For the linear system
\begin{equation}\label{eq2.1}
(I-{\bf u}{\bf u}^{\rm H})(A-\rho I)(I-{\bf u}{\bf u}^{\rm H}){\bf t}=-{\bf r},\quad {\bf t}\bot{\bf u}.
\end{equation}
there hold the following results:\\
{\rm(i)} equation (\ref{eq2.1}) has no solution if ${\bf b}\notin \mathcal{R}(U_{\bot}^{\rm H}AU_{\bot}-\rho I)$, where $\mathcal{R}(\cdot)$ denotes the range of a matrix;\\
{\rm(ii)} equation (\ref{eq2.1}) has at least one solution if ${\bf b}\in \mathcal{R}(U_{\bot}^{\rm H}AU_{\bot}-\rho I)$;\\
{\rm(iii)} equation (\ref{eq2.1}) has a unique solution if and only if $\rho$ is not an eigenvalue of $U_{\bot}^{\rm H}AU_{\bot}$.
\end{theorem}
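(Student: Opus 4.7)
The plan is to reduce the $n$-dimensional, singular, constrained system (\ref{eq2.1}) to an equivalent square $(n-1)\times(n-1)$ linear system and then read off (i)--(iii) from standard linear algebra applied to that reduction.

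First, I would exploit the unitary structure of $[{\bf u},\,U_\bot]$. One has $U_\bot^{\rm H}U_\bot=I_{n-1}$, $U_\bot U_\bot^{\rm H}=I-{\bf u}{\bf u}^{\rm H}$, and $U_\bot^{\rm H}{\bf u}={\bf 0}$. In particular, the orthogonality constraint ${\bf t}\bot{\bf u}$ is equivalent to ${\bf t}=U_\bot{\bf y}$ for a unique ${\bf y}\in\mathbb{C}^{n-1}$, while the residual admits the unique representation ${\bf r}=U_\bot{\bf b}$ already isolated in the theorem statement. Substituting ${\bf t}=U_\bot{\bf y}$ into (\ref{eq2.1}), using $(I-{\bf u}{\bf u}^{\rm H})U_\bot=U_\bot U_\bot^{\rm H}U_\bot=U_\bot$, and premultiplying by $U_\bot^{\rm H}$ yields the reduced equation
\begin{equation*}
(U_\bot^{\rm H}AU_\bot-\rho I){\bf y}=-{\bf b}.
\end{equation*}
Conversely, if ${\bf y}$ solves this reduced system, then reversing the identities shows ${\bf t}=U_\bot{\bf y}$ satisfies both (\ref{eq2.1}) and ${\bf t}\bot{\bf u}$. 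Hence the solution sets of (\ref{eq2.1}) and of the reduced system are in bijection via ${\bf t}=U_\bot{\bf y}$.

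With this equivalence in hand, (i)--(iii) become standard solvability statements about a square $(n-1)\times(n-1)$ linear system. Items (i) and (ii) are the Fredholm-type dichotomy that the reduced system admits a solution if and only if $-{\bf b}\in\mathcal{R}(U_\bot^{\rm H}AU_\bot-\rho I)$. For (iii), uniqueness of solution is equivalent to the coefficient matrix $U_\bot^{\rm H}AU_\bot-\rho I$ being nonsingular, i.e., to $\rho$ not being an eigenvalue of $U_\bot^{\rm H}AU_\bot$.

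The main technical point to guard, and the only real obstacle, is to verify that the bijection between solutions is faithful in both directions: premultiplying by the rectangular matrix $U_\bot^{\rm H}$ could in principle create or destroy solutions. This concern is dispatched by observing that both sides of (\ref{eq2.1}) already lie in ${\rm span}\{{\bf u}\}^\bot=\mathcal{R}(U_\bot)$, and on $\mathcal{R}(U_\bot)$ the map $U_\bot^{\rm H}$ is injective (it is a left inverse of $U_\bot$). Once this observation is spelled out cleanly, the remainder of the argument is immediate from textbook linear algebra on the reduced system.
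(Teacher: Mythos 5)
Your reduction is correct, and it is essentially the argument the paper itself relies on: the theorem is quoted from Feng and Jia without proof, but the identical unitary-completion step --- writing ${\bf t}=U_\bot{\bf y}$, using $(I-{\bf u}{\bf u}^{\rm H})=U_\bot U_\bot^{\rm H}$, and cancelling $U_\bot$ by injectivity on its range to obtain the square system $(U_\bot^{\rm H}AU_\bot-\rho I){\bf y}=-{\bf b}$ --- is exactly what the paper performs in the proof of Theorem \ref{Thm12} and again in deriving \eqref{eqn2.1}. Your care about the faithfulness of the bijection (both sides already lying in $\mathcal{R}(U_\bot)$) is the right point to check, and with it items (i)--(iii) follow from the standard solvability dichotomy for a square system, as you state.
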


Theorem \ref{Thm11} indicates that the correction equation {\rm(}\ref{1.1}{\rm)} may have a unique solution, infinite number of solutions or no solution. We revisit this problem and point out that the Jacobi-Davidson correction equation {\rm(}\ref{1.1}{\rm)} either has a unique solution or has no solution, and it will never has infinite number of solutions theoretically.
Furthermore, the conditions provided by Theorem \ref{Thm11} for judging the existence of solution of the correction equation are difficult to use in practice.
In this section, we aim to cheaply judge the existence of solution of the correction equations, and provide computable necessary and sufficient conditions for expanding the search subspace successfully.

As the Jacobi-Davidson correction equation (\ref{1.1}) is a special case of the alternative correct equation, it is sufficient to consider \eqref{eqn23}. We have the following thorem.
\begin{theorem}\label{Thm12}
Suppose that ${\rm span}\{{\bf u}_k\}\subseteq\mathcal{W}\subseteq {\rm span}\{V_k\}$, let $W$ be an orthonormal basis for $\mathcal{W}$, and denote ${\bf z}_k=W^{\rm H}{\bf u}_k$. Then\\
{\rm(i)} the alternative correction equation {\rm(\ref{eqn23})} has a unique solution if and only if $W^{\rm H}(A-\widetilde{\lambda}_kI)^{-1}W$ is nonsingular.\\
{\rm(ii)} the alternative correction equation {\rm(\ref{eqn23})} has no solution if and only if $W^{\rm H}(A-\widetilde{\lambda}_kI)^{-1}W$ is singular and ${\bf z}_k\notin {\rm span}\{W^{\rm H}(A-\widetilde{\lambda}_kI)^{-1}W\}$.\\
{\rm(iii)} the alternative correction equation {\rm(\ref{eqn23})} has infinite number of solutions if and only if $W^{\rm H}(A-\widetilde{\lambda}_kI)^{-1}W$ is singular and ${\bf z}_k\in{\rm span}\{W^{\rm H}(A-\widetilde{\lambda}_kI)^{-1}W\}$.\\
\end{theorem}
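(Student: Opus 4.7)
The plan is to reduce the projected correction equation to a small linear system in a vector of Lagrange-multiplier type, and then read off the three cases from the solvability of that reduced system.

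First I would exploit the constraint ${\bf v}\in{\rm span}\{W\}^{\bot}$, which gives $(I-WW^{\rm H}){\bf v}={\bf v}$. Left-multiplying by $(I-WW^{\rm H})$ in (\ref{eqn23}) is equivalent to requiring that $(A-\widetilde{\lambda}_kI){\bf v}+{\bf r}_k$ lies in ${\rm span}\{W\}$, so there exists a vector $\boldsymbol{\mu}$ such that
$$
(A-\widetilde{\lambda}_kI){\bf v}=-{\bf r}_k+W\boldsymbol{\mu}.
$$
Since by assumption $\widetilde{\lambda}_k$ is not an eigenvalue of $A$, the matrix $A-\widetilde{\lambda}_kI$ is invertible. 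Using ${\bf r}_k=(A-\widetilde{\lambda}_kI){\bf u}_k$, this yields the explicit representation
$$
{\bf v}=-{\bf u}_k+(A-\widetilde{\lambda}_kI)^{-1}W\boldsymbol{\mu}.
$$

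Next I would impose the orthogonality $W^{\rm H}{\bf v}=0$. Substituting and using ${\bf z}_k=W^{\rm H}{\bf u}_k$ gives the reduced system
$$
W^{\rm H}(A-\widetilde{\lambda}_kI)^{-1}W\boldsymbol{\mu}={\bf z}_k.
$$
Because the columns of $W$ are linearly independent (orthonormal), the map $\boldsymbol{\mu}\mapsto{\bf v}=-{\bf u}_k+(A-\widetilde{\lambda}_kI)^{-1}W\boldsymbol{\mu}$ is injective, so the set of solutions $\boldsymbol{\mu}$ is in one-to-one correspondence with the set of solutions ${\bf v}\in{\rm span}\{W\}^{\bot}$ of (\ref{eqn23}). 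Conversely, every such ${\bf v}$ yields some admissible $\boldsymbol{\mu}$ by the first paragraph. Thus the original correction equation and the reduced system have exactly the same cardinality of solution sets.

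The three conclusions then drop out by standard linear algebra applied to the reduced $\dim(\mathcal{W})\times\dim(\mathcal{W})$ system. Part (i) follows because a unique solution exists iff the coefficient matrix $W^{\rm H}(A-\widetilde{\lambda}_kI)^{-1}W$ is nonsingular. Parts (ii) and (iii) follow by the Fredholm alternative: when the coefficient matrix is singular, solvability is equivalent to ${\bf z}_k\in{\rm span}\{W^{\rm H}(A-\widetilde{\lambda}_kI)^{-1}W\}$, which gives infinitely many solutions in that case and none otherwise.

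I do not anticipate a serious obstacle; the only point that needs care is verifying that the correspondence $\boldsymbol{\mu}\leftrightarrow{\bf v}$ really is a bijection (so that ``unique $\boldsymbol{\mu}$'' translates to ``unique ${\bf v}$'' and likewise for the infinite case), which follows from the injectivity of $W$ and the invertibility of $A-\widetilde{\lambda}_kI$. One should also note that the hypothesis ${\rm span}\{{\bf u}_k\}\subseteq\mathcal{W}$ is used implicitly to guarantee ${\bf u}_k=WW^{\rm H}{\bf u}_k=W{\bf z}_k$, so that the identity $(A-\widetilde{\lambda}_kI)^{-1}{\bf r}_k={\bf u}_k$ appears consistently on both sides of the reduction.
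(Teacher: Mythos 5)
Your proposal is correct and follows essentially the same route as the paper: both reduce (\ref{eqn23}) to the same $\dim(\mathcal{W})\times\dim(\mathcal{W})$ system $W^{\rm H}(A-\widetilde{\lambda}_kI)^{-1}W\boldsymbol{\mu}={\bf z}_k$ (the paper's auxiliary vector $\widehat{\bf z}$ is exactly your multiplier $\boldsymbol{\mu}$, since its intermediate equation amounts to $(A-\widetilde{\lambda}_kI)({\bf u}_k+{\bf v})=W\widehat{\bf z}$), and then read off the three cases. The only cosmetic difference is that the paper phrases the reduction through an orthonormal complement basis $\widehat{U}_{\bot}$ rather than through the projector $I-WW^{\rm H}$ directly.
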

\begin{proof}
Let $\big[W,~\widehat{U}_{\bot}\big]$ be unitary,
then (\ref{eqn23}) can be reformulated as
$$
\widehat{U}_{\bot}\big[\widehat{U}_{\bot}^{\rm H}(A-\widetilde{\lambda}_k I)\widehat{U}_{\bot}\big]\widehat{U}_{\bot}^{\rm H}{\bf v}=-\widehat{U}_{\bot}\widehat{U}_{\bot}^{\rm H}{\bf r}_k,
$$
where we used the fact ${\bf r}_k=\widehat{U}_{\bot}\widehat{U}_{\bot}^{\rm H}{\bf r}_k$. As $\widehat{U}_{\bot}\in\mathbb{C}^{n\times (n-1)}$ is orthonormal, we can rewrite the above equation as
$$
\big[\widehat{U}_{\bot}^{\rm H}(A-\widetilde{\lambda}_k I)\widehat{U}_{\bot}\big]\widehat{U}_{\bot}^{\rm H}{\bf v}=-\widehat{U}_{\bot}^{\rm H}{\bf r}_k.
$$
Denote ${\bf \widehat{w}}=\widehat{U}_{\bot}^{\rm H}{\bf v}$, then we have
\begin{equation}\label{Uwrk}
\widehat{U}_{\bot}^{\rm H}\big[(A-\widetilde{\lambda}_k I)\widehat{U}_{\bot}{\bf \widehat{w}}+{\bf r}_k\big]=0.
\end{equation}
Since
$$
{\bf r}_k = (A-\widetilde{\lambda}_k I){\bf u}_k = (A-\widetilde{\lambda}_k I)W(W^{\rm H}{\bf u}_k) = (A-\widetilde{\lambda}_k I)W{\bf z}_k,
$$
it follows from \eqref{Uwrk} that
$$
\widehat{U}_{\bot}^{\rm H}(A-\widetilde{\lambda}_k I)\big[W,~\widehat{U}_{\bot}\big]\left[
                                                                                              \begin{array}{c}
                                                                                               {\bf z}_k \\
                                                                                               {\bf \widehat{w}} \\
                                                                                                  \end{array}
                                                                                                \right]
=0.
$$
Or equivalently, there exists a vector ${\bf \widehat{z}}$, such that
$$
(A-\widetilde{\lambda}_k I)\big[W,~\widehat{U}_{\bot}\big]\left[
                                                                 \begin{array}{c}
                                                                  {\bf z}_k \\
                                                                  {\bf \widehat{w}} \\
                                                                 \end{array}
                                                          \right]=W{\bf \widehat{z}},
$$
from which we obtain
\begin{align}\label{detereq}
\left[
     \begin{array}{c}
     {\bf z}_k \\
     {\bf \widehat{w}} \\
     \end{array}
\right]&=\left[
          \begin{array}{c}
           W^{\rm H}\\
           \widehat{U}_{\bot}^{\rm H}\\
          \end{array}
        \right](A-\widetilde{\lambda}_k I)^{-1}W{\bf \widehat{z}}\nonumber\\
        &=\left[
           \begin{array}{c}
           W^{\rm H}(A-\widetilde{\lambda}_k I)^{-1}W{\bf \widehat{z}} \\
           \widehat{U}_{\bot}^{\rm H}(A-\widetilde{\lambda}_k I)^{-1}W{\bf \widehat{z}} \\
           \end{array}
           \right].
\end{align}
Recall that ${\bf v}\in {\rm span}\{\widehat{U}_{\bot}\}$ and ${\bf \widehat{w}}=\widehat{U}_{\bot}^{\rm H}{\bf v}$, which implies that ${\bf v}$ is uniquely determined by ${\bf \widehat{w}}$. Thus, in terms of \eqref{detereq} we get the results of (i), (ii), and (iii).%
\end{proof}
\begin{rem}
Theorem \ref{Thm12} indicates that the existence and uniqueness of the solution to {\rm(}\ref{eqn23}{\rm)} and {\rm(}\ref{eqn24}{\rm)} cannot be guaranteed theoretically. In other words, the alternative Jacobi-Davidson correction equations may suffer from the difficulty of having no solution or having infinte number of solutions.
\end{rem}

As an extremal case of the alternative correction equation, the Jacobi-Davidson correction equation {\rm(\ref{1.1})} is achieved when $\mathcal{W}={\rm span}\{{\bf u}_k\}$. In this case, $W={\bf u}_k$, ${\bf z}_k={\bf u}_k^{\rm H}{\bf u}_k=1$, and \eqref{detereq} reduces to
\begin{align*}
\left[
     \begin{array}{c}
     1 \\
     {\bf \widehat{w}} \\
     \end{array}
\right]&=\left[
          \begin{array}{c}
           {\bf u}_k^{\rm H}\\
           \widehat{U}_{\bot}^{\rm H}\\
          \end{array}
        \right](A-\widetilde{\lambda}_k I)^{-1}{\bf u}_k\cdot{\bf \beta}\\
        &=\left[
           \begin{array}{c}
           {\bf u}_k^{\rm H}(A-\widetilde{\lambda}_k I)^{-1}{\bf u}_k\cdot{\bf \beta} \\
           \widehat{U}_{\bot}^{\rm H}(A-\widetilde{\lambda}_k I)^{-1}{\bf u}_k\cdot{\bf \beta} \\
           \end{array}
           \right],
\end{align*}
where $\beta\in\mathbb{C}$ is a scalar. In light of the above equation, we draw the follow conclusion.
\begin{cor}\label{Thm1}
The Jacobi-Davidson correction equation {\rm(\ref{1.1})} either has a unique solution or has no solution. More precisely,\\
{\rm(i)}~The Jacobi-Davidson correction equation {\rm(\ref{1.1})} has a unique solution if and only if ${\bf u}_k^{\rm H}(A-\widetilde{\lambda}_kI)^{-1}{\bf u}_k\neq 0$.\\
{\rm(ii)}~The Jacobi-Davidson correction equation {\rm(\ref{1.1})} has no solution if and only if ${\bf u}_k^{\rm H}(A-\widetilde{\lambda}_kI)^{-1}{\bf u}_k= 0$.
\end{cor}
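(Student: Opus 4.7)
The plan is to derive this corollary directly from Theorem~\ref{Thm12} by specializing to $\mathcal{W}={\rm span}\{{\bf u}_k\}$. In that case an orthonormal basis for $\mathcal{W}$ is the single unit vector $W={\bf u}_k$, so the ``matrix'' $W^{\rm H}(A-\widetilde{\lambda}_kI)^{-1}W$ collapses to the scalar $\alpha:={\bf u}_k^{\rm H}(A-\widetilde{\lambda}_kI)^{-1}{\bf u}_k$, and ${\bf z}_k=W^{\rm H}{\bf u}_k={\bf u}_k^{\rm H}{\bf u}_k=1$. The three cases of Theorem~\ref{Thm12} then reduce to scalar conditions on $\alpha$, which I would simply read off.

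First I would record that Theorem~\ref{Thm12}(i) asserts unique solvability iff $\alpha\neq 0$, which is precisely assertion~(i) of the corollary. Next, for the other two cases both hypotheses require $\alpha=0$, i.e.\ the $1\times 1$ matrix $[\alpha]$ is singular; its range is then $\{0\}$. At this point the crucial observation is that ${\bf z}_k=1\neq 0$, so ${\bf z}_k\notin{\rm span}\{W^{\rm H}(A-\widetilde{\lambda}_kI)^{-1}W\}=\{0\}$. Hence Theorem~\ref{Thm12}(ii) applies whenever $\alpha=0$, giving assertion~(ii), and Theorem~\ref{Thm12}(iii) can never hold, since its second hypothesis ${\bf z}_k\in\{0\}$ fails. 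This simultaneously establishes the dichotomy claimed in the first sentence of the corollary.

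As a sanity check, I would also verify the specialization of the derivation \eqref{detereq} displayed between Theorem~\ref{Thm12} and the corollary: since $(A-\widetilde{\lambda}_kI)^{-1}W{\bf \widehat{z}}$ becomes a single vector scaled by a complex number $\beta$, the first component equation reads $1=\alpha\,\beta$, which is solvable (uniquely, with $\beta=1/\alpha$) precisely when $\alpha\neq 0$ and inconsistent otherwise. This makes both directions of (i) and (ii) transparent and confirms that ${\bf \widehat{w}}$, and therefore ${\bf v}=\widehat{U}_{\bot}{\bf \widehat{w}}$, is then determined.

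There is no substantive obstacle here; the only point that could look mildly delicate is arguing that the infinite-solution branch of Theorem~\ref{Thm12} is genuinely excluded rather than merely ``generically'' excluded, but this is immediate from ${\bf z}_k=1\neq 0$. Accordingly, the proof will be short: one specialization, one scalar reduction, and one observation that $1\notin\{0\}$.
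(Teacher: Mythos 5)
Your proposal is correct and follows essentially the same route as the paper: the paper likewise specializes the derivation of Theorem~\ref{Thm12} to $W={\bf u}_k$, ${\bf z}_k=1$, and reads the conclusion off the resulting scalar equation $1={\bf u}_k^{\rm H}(A-\widetilde{\lambda}_kI)^{-1}{\bf u}_k\cdot\beta$, exactly as in your ``sanity check.'' Your explicit remark that ${\bf z}_k=1\neq 0$ rules out the infinite-solution branch is a welcome touch the paper leaves implicit.
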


\begin{rem}
Some remarks on Theorem \ref{Thm1} are also in order.
First, it is known that the correction equation is almost always solved approximately in practice \cite{JD,Stewart,Van}, however, if the linear system is inconsistent, then the obtained approximations may fail to expand the search subspace efficiently. This explains in some degree why the Jacobi-Davidson method converges irregularly in many cases \cite{Feng}.
Second, an interesting question is, if no solution exists, how to effectively expand the projection subspace? As ${\bf r}_k\bot{\bf u}_k$, one can exploit the residual ${\bf r}_k$ to expand the search subspace, and the Jacobi-Davidson method reduces to a residual expansion method \cite{Wu,Ye} in this case. Third, recall that the condition ${\bf u}_k^{\rm H}(A-\widetilde{\lambda}_kI)^{-1}{\bf u}_k\neq 0$ is
needed in the derivation of the Jacobi Davidson method \cite{JD,Stewart,Van}. Theorem \ref{Thm1} shows that this condition is indeed a necessary and sufficient condition for the Jacobi-Davidson correction equation to have a unique solution.
\end{rem}

Next, we take into account the two-sided and alternating Jacobi-Davidson correction equations (\ref{1.4})--(\ref{1.7}). The following theorem indicates that these correction equations may be inconsistent. The proof is similar to that of Corollary \ref{Thm1}, and thus is omitted.
\begin{theorem}
Under the notations in Section I, for the two-sided and alternating Jacobi-Davidson correction equations, we have that\\
{\rm(i)}~The two-sided Jacobi-Davidson correction equations {\rm(}\ref{1.4}{\rm)} and {\rm(}\ref{1.5}{\rm)} either have unique solutions or have no solutions. They have unique solutions if and only if ${\bf p}^{\rm H}(A-\theta I)^{-1}{\bf q}\neq 0$, and have no solutions if and only if ${\bf p}^{\rm H}(A-\theta I)^{-1}{\bf q}=0$.\\
{\rm(ii)}~The two-sided Jacobi-Davidson right correction equation {\rm(}\ref{1.6}{\rm)} either has a unique solution or has no solution. It has a unique solution if and only if ${\bf q}^{\rm H}(A-\theta I)^{-1}{\bf q}\neq 0$, and has no solution if and only if ${\bf q}^{\rm H}(A-\theta I)^{-1}{\bf q}=0$.\\
{\rm(iii)}~The two-sided Jacobi-Davidson left correction equation {\rm(}\ref{1.7}{\rm)} either has a unique solution or has no solution. It has a unique solution if and only if ${\bf p}^{\rm H}(A-\theta I)^{-1}{\bf p}\neq 0$, and has no solution if and only if ${\bf p}^{\rm H}(A-\theta I)^{-1}{\bf p}=0$.
\end{theorem}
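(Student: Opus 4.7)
The plan is to adapt the argument of Corollary \ref{Thm1} to the four correction equations of interest, the twist being that the projectors involved are oblique rather than orthogonal. I will treat (\ref{1.4}) in detail; the other three equations then follow by analogous reasoning. Under the normalization ${\bf q}^{\rm H}{\bf p}=1$, the projector $P_1:=I-{\bf q}{\bf p}^{\rm H}$ is idempotent with range ${\rm span}\{{\bf p}\}^{\bot}$ and null space ${\rm span}\{{\bf q}\}$. The Petrov--Galerkin relation ${\bf r}_q\bot{\bf p}$ places ${\bf r}_q$ in the range of $P_1$, while the side constraint ${\bf s}\bot{\bf p}$ translates into $P_1{\bf s}={\bf s}$. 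Hence (\ref{1.4}) simplifies to $P_1(A-\theta I){\bf s}=-{\bf r}_q$. Exploiting the identity $P_1(A-\theta I){\bf q}={\bf r}_q-{\bf q}({\bf p}^{\rm H}{\bf r}_q)={\bf r}_q$, this rewrites as $P_1(A-\theta I)({\bf s}+{\bf q})={\bf 0}$. Since $A-\theta I$ is nonsingular by the standing assumption that $\theta$ is not an eigenvalue of $A$, this is equivalent to ${\bf s}=\beta(A-\theta I)^{-1}{\bf q}-{\bf q}$ for some $\beta\in\mathbb{C}$, and the remaining constraint ${\bf s}\bot{\bf p}$ collapses to the scalar equation $\beta\,{\bf p}^{\rm H}(A-\theta I)^{-1}{\bf q}={\bf p}^{\rm H}{\bf q}=1$. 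This is uniquely solvable if and only if ${\bf p}^{\rm H}(A-\theta I)^{-1}{\bf q}\ne 0$ and is inconsistent otherwise; in particular, infinitely many solutions never occur.

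Equation (\ref{1.5}) is obtained from (\ref{1.4}) by the replacement $({\bf q},{\bf p},A,\theta)\leftrightarrow({\bf p},{\bf q},A^{\rm H},\bar\theta)$, so the identical derivation produces the criterion ${\bf q}^{\rm H}(A^{\rm H}-\bar\theta I)^{-1}{\bf p}\ne 0$, which matches the criterion for (\ref{1.4}) via the identity $(A^{\rm H}-\bar\theta I)^{-1}=[(A-\theta I)^{-1}]^{\rm H}$. For the mixed equations (\ref{1.6}) and (\ref{1.7}), the outer oblique projector $I-{\bf q}{\bf p}^{\rm H}/({\bf p}^{\rm H}{\bf q})$ (resp.\ $I-{\bf p}{\bf q}^{\rm H}/({\bf q}^{\rm H}{\bf p})$) again has null space ${\rm span}\{{\bf q}\}$ (resp.\ ${\rm span}\{{\bf p}\}$), while the inner orthogonal projector acts as the identity on ${\bf s}$ by the constraint ${\bf s}\bot{\bf q}$ (resp.\ on ${\bf t}$ by ${\bf t}\bot{\bf p}$). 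The same three-step reduction then applies verbatim; the only change is that the final scalar equation becomes $\beta\,{\bf q}^{\rm H}(A-\theta I)^{-1}{\bf q}=\|{\bf q}\|^{2}=1$ for (\ref{1.6}) and $\beta\,{\bf p}^{\rm H}(A^{\rm H}-\bar\theta I)^{-1}{\bf p}=\|{\bf p}\|^{2}=1$ for (\ref{1.7}), which deliver the stated conditions after conjugation in the latter case.

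The principal obstacle is that the oblique projectors in (\ref{1.4})--(\ref{1.7}) rule out the unitary block-decomposition used in the proof of Theorem \ref{Thm12}. What unlocks the argument is the identity $P_1(A-\theta I){\bf q}={\bf r}_q$, whose validity rests entirely on the Petrov--Galerkin orthogonalities ${\bf p}^{\rm H}{\bf r}_q={\bf q}^{\rm H}{\bf r}_p=0$. Once this is in hand, each correction equation reduces to a one-parameter family ${\bf s}=\beta(A-\theta I)^{-1}{\bf q}-{\bf q}$ (or its analogue) in which a single scalar inner product decides between unique solvability and inconsistency; this also explains why the ``infinitely many solutions'' alternative that appears in Theorem \ref{Thm12} cannot arise here, since the family is parametrised by one scalar rather than a full subspace.
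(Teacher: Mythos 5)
Your proof is correct, and it supplies in full the argument that the paper leaves implicit (the paper only remarks that the proof is ``similar to that of Corollary \ref{Thm1}'' and omits it). Your route is not a literal transcription of the paper's, though: Corollary \ref{Thm1} is obtained there from Theorem \ref{Thm12}, whose proof passes to the unitary complement $\big[W,~\widehat{U}_{\bot}\big]$ and reduces the correction equation to a smaller system before inverting $A-\widetilde{\lambda}_kI$; that device exploits the orthogonality of the projector and does not transfer verbatim to the oblique projectors in (\ref{1.4})--(\ref{1.7}), a point you correctly identify as the main obstacle. Instead you argue directly from the projector's null space: the identity $(I-{\bf q}{\bf p}^{\rm H})(A-\theta I){\bf q}={\bf r}_q$, which needs only the Petrov--Galerkin relation ${\bf p}^{\rm H}{\bf r}_q=0$, turns each equation into $(A-\theta I)({\bf s}+{\bf q})\in{\rm span}\{{\bf q}\}$, i.e.\ the one-parameter family ${\bf s}=\beta(A-\theta I)^{-1}{\bf q}-{\bf q}$, after which the side constraint yields the scalar solvability condition. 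This is in substance the same computation the paper carries out in the proof of Theorem \ref{Thm2.5}, where the solution of (\ref{1.1}) is written as $-(A-\widetilde{\lambda}_kI)^{-1}{\bf r}_k+\alpha(A-\widetilde{\lambda}_kI)^{-1}{\bf u}_k$, and both approaches terminate at the same scalar equation $\beta\,{\bf p}^{\rm H}(A-\theta I)^{-1}{\bf q}={\bf p}^{\rm H}{\bf q}$; what your version buys is that it handles the bi-orthogonal setting without constructing a bi-orthogonal complement, and it makes transparent why the ``infinitely many solutions'' branch of Theorem \ref{Thm12} cannot occur here. Two cosmetic remarks: for (\ref{1.6}) the conclusion needs only $\|{\bf q}\|^{2}\neq0$, not the normalization $\|{\bf q}\|=1$; and the passage from the criterion ${\bf q}^{\rm H}(A^{\rm H}-\bar{\theta}I)^{-1}{\bf p}\neq 0$ for (\ref{1.5}) (and its analogue for (\ref{1.7})) to the stated conditions via $(A^{\rm H}-\bar{\theta}I)^{-1}=\big[(A-\theta I)^{-1}\big]^{\rm H}$ is exactly right and worth keeping explicit, since the theorem states both conditions in terms of $(A-\theta I)^{-1}$.
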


\section{On stagnation of the Jacobi-Davidson methods}
\setcounter{equation}{0}

In this section, we focus on the difficulty of stagnation that bothers the Jacobi-Davidson method.
Without loss of generality, we make the assumption that ${\bf u}_k$ is the first column of $V_k$. Otherwise, let ${\bf e}_1$ be the first column of the identity matrix, and $H_k$ be a Householder matrix such that $H_k^{\rm H}{\bf y}_k={\bf e}_1$, then ${\bf u}_{k}=V_k{\bf y}_k=(V_kH_k){\bf e}_1$, and we make use of $\widehat{V}_k\equiv V_kH_k$ as the basis of the current search subspace. Notice that ${\rm span}\{\widehat{V}_k\}={\rm span}\{{V}_k\}$.
Denote $V_k=[{\bf u}_k,~\widehat{V}_2]$, where $\widehat{V}_2\in\mathbb{C}^{n\times (k-1)}$, and let $[{\bf u}_k,~U_{\bot}]$ be unitary, where $U_{\bot}=\big[\widehat{V}_2,~\widehat{V}_3\big]$ with $\widehat{V}_3\in\mathbb{C}^{n\times(n-k)}$.
Recall that
\begin{equation}\label{eqn2.1}
\big[U_{\bot}^{\rm H}(A-\widetilde{\lambda}_k I)U_{\bot}\big]U_{\bot}^{\rm H}{\bf v}=-U_{\bot}^{\rm H}{\bf r}_k,
\end{equation}
has a unique solution if and only if $U_{\bot}^{\rm H}(A-\widetilde{\lambda}_k I)U_{\bot}$ is nonsingular. Decompose
$$
\big[U_{\bot}^{\rm H}(A-\widetilde{\lambda}_k I){U}_{\bot}\big]^{-1}\equiv\left[\begin{array}{cc}
B_{11} & B_{12}\\
B_{21} & B_{22}\\
\end{array} \right],
$$
where $B_{11}\in\mathbb{C}^{(k-1)\times(k-1)}$, $B_{12}\in\mathbb{C}^{(k-1)\times(n-k)}$, $B_{21}\in\mathbb{C}^{(n-k)\times(k-1)}$, and $B_{22}\in\mathbb{C}^{(n-k)\times(n-k)}$. As ${\bf r}_k\bot V_k$, we obtain from \eqref{eqn2.1} that
\begin{eqnarray}\label{eqn2.3}
{\bf v}={U}_{\bot}{U}_{\bot}^{\rm H}{\bf v}&=&-{U}_{\bot}\big[{U}_{\bot}^{\rm H}(A-\widetilde{\lambda}_k I){U}_{\bot}\big]^{-1}{U}_{\bot}^{\rm H}{\bf r}_k\nonumber\\
&=&-\big[\widehat{V}_2,\widehat{V}_3\big]\left[\begin{array}{cc}
B_{11} & B_{12}\\
B_{21} & B_{22}\\
\end{array} \right]\left[\begin{array}{c}
{\bf 0}\\ \widehat{V}_3^{\rm H}{\bf r}_k
\end{array} \right]\nonumber\\
&=&-\widehat{V}_2B_{12}\widehat{V}_3^{\rm H}{\bf r}_k-\widehat{V}_3B_{22}\widehat{V}_3^{\rm H}{\bf r}_k.
\end{eqnarray}
As a consequence, we obtain
\begin{eqnarray}\label{eqn2.4}
V_kV_k^{\rm H}{\bf v}&=&-\big[{\bf u}_k,~\widehat{V}_2\big]\left[\begin{array}{c}
{\bf u}_k^{\rm H}\\ \widehat{V}_2^{\rm H}
\end{array} \right]\big[\widehat{V}_2,~\widehat{V}_3\big]\left[\begin{array}{c}
B_{12}\\B_{22}
\end{array} \right]\widehat{V}_3^{\rm H}{\bf r}_k\nonumber\\
&=&-\big[{\bf u}_k,~\widehat{V}_2\big]\left[\begin{array}{c}
{\bf 0}\\B_{12}
\end{array} \right]\widehat{V}_3^{\rm H}{\bf r}_k\nonumber\\
&=&-\widehat{V}_2B_{12}\widehat{V}_3^{\rm H}{\bf r}_k.
\end{eqnarray}
Note that ${\bf v}\in {\rm span}\{V_k\}$ if and only if $V_kV_k^{\rm H}{\bf v}={\bf v}$. In view of (\ref{eqn2.3}) and (\ref{eqn2.4}), we get the following result.
\begin{theorem}\label{Thm2}
Under the above notations, if {\rm (\ref{1.1})} has a unique solution, then
\begin{equation}
{\bf v}\notin {\rm span}\{V_k\}\Longleftrightarrow {\bf r}_k\notin \mathcal{N}(\widehat{V}_3B_{22}\widehat{V}_3^{\rm H}),
\end{equation}
where $\mathcal{N}(\widehat{V}_3B_{22}\widehat{V}_3^{\rm H})$ denotes the null space of $\widehat{V}_3B_{22}\widehat{V}_3^{\rm H}$.
\end{theorem}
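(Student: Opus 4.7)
The plan is to read off the characterization directly from the two explicit expressions \eqref{eqn2.3} and \eqref{eqn2.4}, which already do all of the heavy algebraic lifting. The statement then reduces to a one-line subtraction together with the elementary orthogonality observation that, since $V_k$ has orthonormal columns, a vector $\mathbf{v}$ lies in $\mathrm{span}\{V_k\}$ if and only if it is fixed by the orthogonal projector $V_kV_k^{\rm H}$, i.e., $V_kV_k^{\rm H}\mathbf{v}=\mathbf{v}$.

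First I would note that the hypothesis that \eqref{1.1} has a unique solution guarantees that $U_{\bot}^{\rm H}(A-\widetilde{\lambda}_kI)U_{\bot}$ is invertible, so the block inverse with blocks $B_{11},B_{12},B_{21},B_{22}$ appearing above is well defined, and consequently formulas \eqref{eqn2.3} and \eqref{eqn2.4} are legitimate. Next I would subtract \eqref{eqn2.4} from \eqref{eqn2.3} to obtain
\begin{equation*}
\mathbf{v}-V_kV_k^{\rm H}\mathbf{v}=-\widehat{V}_3B_{22}\widehat{V}_3^{\rm H}\mathbf{r}_k.
\end{equation*}
From here the argument is immediate: by the projector criterion above, $\mathbf{v}\in\mathrm{span}\{V_k\}$ is equivalent to the left-hand side vanishing, which by the displayed identity is equivalent to $\widehat{V}_3B_{22}\widehat{V}_3^{\rm H}\mathbf{r}_k=\mathbf{0}$, i.e., to $\mathbf{r}_k\in\mathcal{N}(\widehat{V}_3B_{22}\widehat{V}_3^{\rm H})$. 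Taking the contrapositive gives the stated biconditional.

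There is no real obstacle in this proof; the only subtlety worth flagging is to double-check the block calculation in \eqref{eqn2.4}, which relies on ${\bf r}_k\perp V_k$ (so that $\widehat{V}_2^{\rm H}\mathbf{r}_k=\mathbf{0}$ and $\mathbf{u}_k^{\rm H}\mathbf{r}_k=0$) to kill the $B_{11}$ and $B_{21}$ contributions, leaving only the $\widehat{V}_2B_{12}\widehat{V}_3^{\rm H}\mathbf{r}_k$ term. Once this is in hand, the rest is essentially a tautology, so I would keep the write-up to a few lines and simply cite \eqref{eqn2.3}--\eqref{eqn2.4} together with the projector identity.
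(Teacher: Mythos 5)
Your proposal is correct and is essentially identical to the paper's own argument: the paper likewise establishes \eqref{eqn2.3} and \eqref{eqn2.4} and then concludes the theorem by comparing them via the projector criterion $V_kV_k^{\rm H}{\bf v}={\bf v}$. (One tiny inaccuracy in your closing remark: it is ${\bf r}_k\perp V_k$ that kills the $B_{11},B_{21}$ blocks already in \eqref{eqn2.3}, while in \eqref{eqn2.4} it is the orthogonality $V_k^{\rm H}\widehat{V}_3=0$ that removes the $B_{22}$ term; this does not affect the validity of your argument.)
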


\begin{example}
Consider the $4\times 4$ matrix
$$
A=\left[\begin{array}{cccc}
1 & 1 & 2 & 3\\ 0& 1 & 2 & -1\\ 0& 0& -2 & 2 \\ 1 &1/3 &4/3 & 0\\
\end{array} \right].
$$
If we choose
$$
V_2=\left[\begin{array}{cc}
1 & 0\\ 0& 1\\ 0& 0\\ 0 &0
\end{array} \right],
$$
then $\widetilde{\lambda}_2=1$ is a Ritz value and ${\bf u}_2=\big[1,0,0,0\big]^{\rm H}$ is a Ritz vector of $A$ in the search subspace ${\rm span}\{V_2\}$, which is a defective eigenvalue of $V_2^{\rm H}AV_2$. Thus,
${\bf r}_2=A{\bf u}_2-{\bf u}_2=\big[0,0,0,1\big]^{\rm H}~\bot~{\rm span}\{V_2\}$, and
$$
U_{\bot}=\left[\begin{array}{ccc}
0 & 0 & 0\\
1 & 0 & 0\\
0 & 1 & 0\\
0 & 0 & 1\\
\end{array} \right],
\quad
U_{\bot}^{\rm H}(A-\widetilde{\lambda}_2I)U_{\bot}=
\left[\begin{array}{ccc}
0 & 2 & -1\\
0 & -3 & 2\\
1/3 & 4/3 & -1\\
\end{array} \right].
$$
Therefore, \eqref{eqn2.1} has a unique solution
$U_{\bot}^{\rm H}{\bf v}=\big[-3,0,0\big]^{\rm H}$, and the expansion vector
$$
{\bf v}=U_{\bot}U_{\bot}^{\rm H}{\bf v}=\big[0,-3,0,0\big]^{\rm H}\in{\rm span}\{V_2\}.
$$
Consequently, we fail to expand the search subspace ${\rm span}\{V_2\}$ by using the solution of the Jacobi-Davidson correction equation.
Indeed, we note that
$$
\widehat{V}_3B_{22}\widehat{V}_3^{\rm H}=\left[\begin{array}{cccc}
0 & 0 & 0 & 0\\ 0& 0 & 0 & 0\\ 0& 0& 1 & 0 \\ 0 &0 &2 & 0\\
\end{array} \right],
$$
$\widehat{V}_3B_{22}\widehat{V}_3^{\rm H}{\bf r}_2={\bf 0}$, and it follows from Theorem \ref{Thm2} that ${\bf v}\in {\rm span}\{V_2\}$.
\end{example}

Unfortunately, Theorem \ref{Thm2} is difficult to use in practice. In this section, we aim to derive a computable necessary and sufficient condition for ${\bf v}\notin {\rm span}\{V_k\}$, i.e., $V_kV_k^{\rm H}{\bf v}\neq {\bf v}$, such that one can expand the search subspace successfully. We have the following theorem.
\begin{theorem}\label{Thm2.5}
Suppose that the Jacobi-Davidson correction equation {\rm(\ref{1.1})} has a unique solution, then
\begin{equation}
{\bf v}\notin {\rm span}\{V_k\}\Longleftrightarrow {\bf u}_k\notin {\rm span}\{(A-\widetilde{\lambda}_kI)V_k\}.
\end{equation}
\end{theorem}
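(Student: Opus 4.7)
The plan is to bypass the block-inverse formula of Theorem~\ref{Thm2} and instead obtain an explicit closed form for the unique solution~${\bf v}$, from which the equivalence will be immediate.

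First, I would exploit the standing assumption that $\widetilde{\lambda}_k$ is not an eigenvalue of $A$, so that $A-\widetilde{\lambda}_kI$ is invertible, together with the uniqueness assumption, which by Corollary~\ref{Thm1} amounts to $\mu:={\bf u}_k^{\rm H}(A-\widetilde{\lambda}_kI)^{-1}{\bf u}_k\neq 0$. Using ${\bf v}\bot{\bf u}_k$, the correction equation (\ref{1.1}) can be rewritten as $(A-\widetilde{\lambda}_kI){\bf v}+\alpha{\bf u}_k=-{\bf r}_k=-(A-\widetilde{\lambda}_kI){\bf u}_k$ for some scalar~$\alpha$, so that $(A-\widetilde{\lambda}_kI)({\bf u}_k+{\bf v})=-\alpha{\bf u}_k$ and hence
\[
{\bf v}=-\alpha(A-\widetilde{\lambda}_kI)^{-1}{\bf u}_k-{\bf u}_k.
\]
Imposing ${\bf u}_k^{\rm H}{\bf v}=0$ fixes $\alpha=-1/\mu$, yielding the clean representation
\[
{\bf v}=\frac{1}{\mu}(A-\widetilde{\lambda}_kI)^{-1}{\bf u}_k-{\bf u}_k.
\]

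With this in hand, the equivalence is short. Since ${\bf u}_k\in{\rm span}\{V_k\}$ and $1/\mu\neq 0$, the displayed formula shows that ${\bf v}\in{\rm span}\{V_k\}$ if and only if $(A-\widetilde{\lambda}_kI)^{-1}{\bf u}_k\in{\rm span}\{V_k\}$. The latter membership is in turn equivalent to the existence of a vector ${\bf y}$ with $(A-\widetilde{\lambda}_kI)^{-1}{\bf u}_k=V_k{\bf y}$, i.e., ${\bf u}_k=(A-\widetilde{\lambda}_kI)V_k{\bf y}$, which is precisely ${\bf u}_k\in{\rm span}\{(A-\widetilde{\lambda}_kI)V_k\}$. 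Taking contrapositives gives the theorem.

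I do not expect any real obstacle: the one place that requires a moment's care is justifying that the derived ${\bf v}$ is the unique solution (rather than just \emph{a} solution) of~(\ref{1.1}). This follows because uniqueness is already granted by hypothesis, so any admissible ${\bf v}$ satisfying ${\bf v}\bot{\bf u}_k$ must coincide with the vector obtained above. Once this is observed, the argument reduces to elementary linear-algebraic manipulations and does not require the block matrices $B_{11},B_{12},B_{22}$ used in Theorem~\ref{Thm2}, making the resulting criterion genuinely computable, as the theorem claims.
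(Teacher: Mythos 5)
Your proof is correct and follows essentially the same route as the paper: both reduce to the closed form ${\bf v}=-{\bf u}_k+\alpha(A-\widetilde{\lambda}_kI)^{-1}{\bf u}_k$ with $\alpha=1/\mu\neq 0$ (the paper quotes this from the Jacobi--Davidson literature, you rederive it) and then read off the equivalence from the fact that ${\bf u}_k\in{\rm span}\{V_k\}$ and $A-\widetilde{\lambda}_kI$ is invertible. No gaps.
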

\begin{proof}
If {\rm(\ref{1.1})} has a unique solution, then \cite{JD2,JD}
$$
{\bf v}=-(A-\widetilde{\lambda}_kI)^{-1}{\bf r}_k+\alpha(A-\widetilde{\lambda}_kI)^{-1}{\bf u}_k,
$$
where $\alpha=\frac{{\bf u}_k^{\rm H}(A-\widetilde{\lambda}_kI)^{-1}{\bf u}_k}{{\bf u}_k^{\rm H}(A-\widetilde{\lambda}_kI)^{-1}{\bf u}_k}$. Note that $\alpha\neq 0$. Otherwise, we will have ${\bf v}=-{\bf u}_k$, which is a contradiction. Thus,
\begin{eqnarray*}
V_kV_k^{\rm H}{\bf v}&=&-V_kV_k^{\rm H}{\bf u}_k+\alpha V_kV_k^{\rm H}(A-\widetilde{\lambda}_kI)^{-1}{\bf u}_k\\
&=&-{\bf u}_k+\alpha V_kV_k^{\rm H}(A-\widetilde{\lambda}_kI)^{-1}{\bf u}_k,
\end{eqnarray*}
and $V_kV_k^{\rm H}{\bf v}={\bf v}$ if and only if
$$
\alpha V_kV_k^{\rm H}(A-\widetilde{\lambda}_kI)^{-1}{\bf u}_k=\alpha (A-\widetilde{\lambda}_kI)^{-1}{\bf u}_k.
$$
As $\alpha\neq 0$, we arrive at
$$
V_kV_k^{\rm H}(A-\widetilde{\lambda}_kI)^{-1}{\bf u}_k=(A-\widetilde{\lambda}_kI)^{-1}{\bf u}_k.
$$
As a result, ${\bf v}\notin {\rm span}\{V_k\}$ {\it iff} $(A-\widetilde{\lambda}_kI)^{-1}{\bf u}_k\notin {\rm span}\{V_k\}$, or ${\bf u}_k\notin {\rm span}\{(A-\widetilde{\lambda}_kI)V_k\}$.
\end{proof}

It was mentioned in \cite{GS} that if the Ritz value $\widetilde{\lambda}_k$ is defective, then the correction equation {\rm(\ref{1.1})} may have a solution
in the current search subspace \cite[pp.237]{GS}. In such a case the search subspace is not expanded and Jacobi-Davidson
stagnates. An interesting question is whether the defectiveness of $\widetilde{\lambda}_k$ is a necessary and sufficient condition for stagnation of the Jacobi-Davidson method.
The following theorem indicates that $\widetilde{\lambda}_k$ being defective is a sufficient condition to the stagnation of the Jacobi-Davidson method.
\begin{theorem}\label{Cor3.1}
If the Jacobi-Davidson method stagnates, then $\widetilde{\lambda}_k$ is a defective eigenvalue of $V_k^{\rm H}AV_k$.
\end{theorem}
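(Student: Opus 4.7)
The plan is to reduce stagnation to the condition supplied by Theorem \ref{Thm2.5} and then exhibit an explicit Jordan chain of length two for $\widetilde{\lambda}_k$ in the small projected matrix $V_k^{\rm H}AV_k$. Throughout I assume $V_k$ has orthonormal columns, as is standard in Jacobi--Davidson, and I use ${\bf u}_k=V_k{\bf y}_k$ with $V_k^{\rm H}AV_k\,{\bf y}_k=\widetilde{\lambda}_k{\bf y}_k$ and ${\bf y}_k\neq{\bf 0}$.

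First I would translate the stagnation hypothesis ${\bf v}\in {\rm span}\{V_k\}$ by Theorem \ref{Thm2.5} into the statement
$$
{\bf u}_k\in {\rm span}\{(A-\widetilde{\lambda}_kI)V_k\},
$$
so that there is a vector ${\bf c}\in\mathbb{C}^{k}$ with ${\bf u}_k=(A-\widetilde{\lambda}_kI)V_k{\bf c}$. Substituting ${\bf u}_k=V_k{\bf y}_k$ and multiplying on the left by $V_k^{\rm H}$ then yields
$$
{\bf y}_k = V_k^{\rm H}V_k{\bf y}_k = V_k^{\rm H}(A-\widetilde{\lambda}_kI)V_k\,{\bf c} = \bigl(V_k^{\rm H}AV_k-\widetilde{\lambda}_kI\bigr){\bf c}.
$$

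Next I would combine this identity with the Ritz relation $(V_k^{\rm H}AV_k-\widetilde{\lambda}_kI){\bf y}_k={\bf 0}$ to build a Jordan chain. Writing $M=V_k^{\rm H}AV_k-\widetilde{\lambda}_kI$, the two equations $M{\bf c}={\bf y}_k$ and $M{\bf y}_k={\bf 0}$ together with ${\bf y}_k\neq{\bf 0}$ say that ${\bf c}$ is a generalized eigenvector of grade $2$ at the eigenvalue $\widetilde{\lambda}_k$ of $V_k^{\rm H}AV_k$. In particular the Jordan canonical form of $V_k^{\rm H}AV_k$ contains at least one Jordan block of size at least two associated with $\widetilde{\lambda}_k$, so the algebraic multiplicity of $\widetilde{\lambda}_k$ strictly exceeds its geometric multiplicity. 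By definition this means $\widetilde{\lambda}_k$ is a defective eigenvalue of $V_k^{\rm H}AV_k$.

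There is essentially no hard technical obstacle: the whole argument rests on Theorem \ref{Thm2.5}, which already does the non-trivial work of making stagnation equivalent to a range condition on $(A-\widetilde{\lambda}_kI)V_k$. The only point to check carefully is that ${\bf y}_k\neq{\bf 0}$ (which is immediate, since ${\bf u}_k$ is a nonzero Ritz vector) and that the orthonormality of $V_k$ is legitimately assumed so that $V_k^{\rm H}V_k=I$; both are standing conventions in the paper. A brief closing remark could observe that the converse fails in general, which is consistent with the authors' subsequent discussion that defectiveness is only a sufficient, not necessary, condition for stagnation.
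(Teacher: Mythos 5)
Your proposal is correct and follows essentially the same route as the paper: invoke Theorem \ref{Thm2.5} to turn stagnation into ${\bf u}_k\in{\rm span}\{(A-\widetilde{\lambda}_kI)V_k\}$, left-multiply by $V_k^{\rm H}$ to get $(T_k-\widetilde{\lambda}_kI){\bf c}={\bf y}_k$ alongside $(T_k-\widetilde{\lambda}_kI){\bf y}_k={\bf 0}$, and conclude defectiveness from the resulting length-two Jordan chain. The only difference is cosmetic: the paper spells out the last step by an explicit contradiction with the Jordan canonical form, whereas you cite the standard fact that a grade-two generalized eigenvector forces a Jordan block of size at least two, which is equally valid.
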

\begin{proof}
If the Jacobi-Davidson method stagnates, i.e., ${\bf v}\in{\rm span}\{V_k\}$, then we have from Theorem \ref{Thm2.5} that ${\bf u}_k\in{\rm span}\{(A-\widetilde{\lambda}_kI)V_k\}$. Thus, there is a non-zero vector ${\bf h}_k$, such that ${\bf u}_k=V_k{\bf y}_k=(A-\widetilde{\lambda}_kI)V_k{\bf h}_k$, and
$$
{\bf y}_k=V_k^{\rm H}(A-\widetilde{\lambda}_kI)V_k{\bf h}_k=(T_k-\widetilde{\lambda}_kI){\bf h}_k,
$$
where $T_k=V_k^{\rm H}AV_k$ is the projection matrix. Since $(\widetilde{\lambda}_k,{\bf y}_k)$ is an eigenpair of $T_k$, we obtain
\begin{equation}\label{3.6}
(T_k-\widetilde{\lambda}_kI){\bf y}_k=(T_k-\widetilde{\lambda}_kI)^2{\bf h}_k={\bf 0}.
\end{equation}
That is,
\begin{equation}\label{3.7}
(T_k-\widetilde{\lambda}_kI){\bf h}_k\neq{\bf 0}\quad {\rm and}\quad (T_k-\widetilde{\lambda}_kI)^2{\bf h}_k={\bf 0},\quad {\bf h}_k\neq{\bf 0}.
\end{equation}
As a result, $\widetilde{\lambda}_k$ is a defective eigenvalue of $T_k$. Indeed, let $T_k=X^{-1}JX$ be the Jordan decomposition of $T_k$, and the Jordan canonical form
$$
J=\left[\begin{array}{cc}
J_1 & \\
 & J_2\\
\end{array} \right]\in\mathbb{C}^{k\times k},
$$
where $J_1\in\mathbb{C}^{d_k\times d_k}$ is the Jordan block corresponding to $\widetilde{\lambda}_k$, with $d_k$ being the multiplicity of $\widetilde{\lambda}_k$; and $J_2\in\mathbb{C}^{(k-d_k)\times (k-d_k)}$ consists of the Jordan blocks corresponding to eigenvalues other than $\widetilde{\lambda}_k$. So we have from \eqref{3.6} that
\begin{equation}\label{3.8}
(T_k-\widetilde{\lambda}_kI)^2{\bf h}_k=X^{-1}\left[\begin{array}{cc}
(J_1-\widetilde{\lambda}_kI)^2 & \\
 &(J_2-\widetilde{\lambda}_kI)^2\\
\end{array} \right]X{\bf h}_k={\bf 0}.
\end{equation}
Denote $X{\bf h}_k=[{\bf h}^{\rm H},~\widehat{\bf h}^{\rm H}]^{\rm H}$, where ${\bf h}\in\mathbb{C}^{d_k}$ and $\widehat{\bf h}\in\mathbb{C}^{k-d_k}$. As $(J_2-\widetilde{\lambda}_kI)^2$ is nonsingular, it follows from \eqref{3.8} that
$$
\left[\begin{array}{cc}
(J_1-\widetilde{\lambda}_kI)^2 & \\
 &(J_2-\widetilde{\lambda}_kI)^2\\
\end{array}\right]\left[\begin{array}{c}
{\bf h} \\
\widehat{\bf h}\\
\end{array}\right]={\bf 0},
$$
and $\widehat{\bf h}={\bf 0}$.
If $\widetilde{\lambda}_k$ is semi-simple, then
\begin{eqnarray*}
(T_k-\widetilde{\lambda}_kI){\bf h}_k&=&X^{-1}\left[\begin{array}{cc}
J_1-\widetilde{\lambda}_kI& \\
 &J_2-\widetilde{\lambda}_kI\\
\end{array} \right]X{\bf h}_k
=X^{-1}\left[\begin{array}{cc}
O& \\
 &J_2-\widetilde{\lambda}_kI\\
\end{array} \right]X{\bf h}_k\\
&=&X^{-1}\left[\begin{array}{cc}
O& \\
 &J_2-\widetilde{\lambda}_kI\\
\end{array} \right]\left[\begin{array}{c}
{\bf h} \\
{\bf 0}\\
\end{array}\right]={\bf 0},
\end{eqnarray*}
where $O$ stands for a zero matrix. This contradicts to \eqref{3.7}, so we complete the proof.
\end{proof}

However, the inverse is not true, as the following example shows. That is, $\widetilde{\lambda}_k$ is defective does not mean that ${\bf v}\in{\rm span}\{V_k\}$.
\begin{example}
Consider the $4\times 4$ matrix
$$
A=\left[\begin{array}{cccc}
1 & 1 & 1 & 5\\ 0& 1 & 2 & 6\\ 1& 2& 3 & 7 \\ 3 &4 &4 & 8\\
\end{array} \right].
$$
If we choose
$$
V_2=\left[\begin{array}{cc}
1 & 0\\ 0& 1\\ 0& 0\\ 0 &0
\end{array} \right],
$$
then $\widetilde{\lambda}_2=1$ is a Ritz value and ${\bf u}_2=\big[1,0,0,0\big]^{\rm H}$ is a Ritz vector of $A$ in the search subspace ${\rm span}\{V_2\}$, where $\widetilde{\lambda}_2$ is a defective eigenvalue of $V_2^{\rm H}AV_2$. Thus,
${\bf r}_2=A{\bf u}_2-{\bf u}_2=\big[0,0,1,3\big]^{\rm H}~\bot~{\rm span}\{V_2\}$, and
$$
U_{\bot}=\left[\begin{array}{ccc}
0 & 0 & 0\\
1 & 0 & 0\\
0 & 1 & 0\\
0 & 0 & 1\\
\end{array} \right],
\quad
U_{\bot}^{\rm H}(A-\widetilde{\lambda}_2I)U_{\bot}=
\left[\begin{array}{ccc}
0 & 2 & 6\\
2 & 2 & 7\\
4 & 4 & 7\\
\end{array} \right].
$$
Therefore, \eqref{eqn2.1} has a unique solution
$$
U_{\bot}^{\rm H}{\bf v}=\big[-0.571428571428571, -0.428571428571429,0.142857142857143\big]^{\rm H},
$$
and the expansion vector
$$
{\bf v}=\big[0,-0.571428571428571, -0.428571428571429,0.142857142857143\big]^{\rm H}\notin{\rm span}\{V_2\}.
$$
In combination with Theorem \ref{Cor3.1} and Example 2, we conclude that $\widetilde{\lambda}_k$ being defective is not a necessary and sufficient condition for the stagnation of the Jacobi-Davidson method.
\end{example}

Now we focus on (\ref{eqn23}), and see whether the Jacobi-Davidson method based on the alternative correction equation can suffer from stagnation.
\begin{theorem}\label{Thm2.6}
Suppose that the alternative Jacobi-Davidson correction equation {\rm(}\ref{eqn23}{\rm)} has a unique solution, then
\begin{equation}\label{2.11}
{\bf v}\notin {\rm span}\{V_k\}\Longleftrightarrow W\big[W^{\rm H}(A-\widetilde{\lambda}_kI)^{-1}W\big]^{-1}W^{\rm H}{\bf u}_k\notin {\rm span}\{(A-\widetilde{\lambda}_kI)V_k\}.
\end{equation}
\end{theorem}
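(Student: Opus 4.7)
The plan is to mirror the derivation used in the proof of Theorem \ref{Thm2.5}, replacing the single vector ${\bf u}_k$ by the orthonormal basis $W$ and the scalar multiplier $\alpha$ by an auxiliary vector $\boldsymbol{\mu}$. First I would exploit ${\bf v}\perp {\rm span}\{W\}$ to collapse $(I-WW^{\rm H}){\bf v}={\bf v}$, so that the alternative correction equation (\ref{eqn23}) reduces to $(I-WW^{\rm H})(A-\widetilde{\lambda}_kI){\bf v}=-{\bf r}_k$. This says $(A-\widetilde{\lambda}_kI){\bf v}+{\bf r}_k\in {\rm span}\{W\}$, so there exists a vector $\boldsymbol{\mu}$ with $(A-\widetilde{\lambda}_kI){\bf v}+{\bf r}_k=W\boldsymbol{\mu}$. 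Substituting ${\bf r}_k=(A-\widetilde{\lambda}_kI){\bf u}_k$ and inverting $A-\widetilde{\lambda}_kI$ (which is nonsingular by the blanket assumption that $\widetilde{\lambda}_k$ is not an eigenvalue of $A$) yields the closed form
$$
{\bf v}=-{\bf u}_k+(A-\widetilde{\lambda}_kI)^{-1}W\boldsymbol{\mu}.
$$

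Next I would pin down $\boldsymbol{\mu}$ by imposing $W^{\rm H}{\bf v}={\bf 0}$, which gives the equation $W^{\rm H}(A-\widetilde{\lambda}_kI)^{-1}W\boldsymbol{\mu}=W^{\rm H}{\bf u}_k$. Theorem \ref{Thm12}(i), combined with the standing uniqueness hypothesis, guarantees that $W^{\rm H}(A-\widetilde{\lambda}_kI)^{-1}W$ is nonsingular, and hence $\boldsymbol{\mu}=\big[W^{\rm H}(A-\widetilde{\lambda}_kI)^{-1}W\big]^{-1}W^{\rm H}{\bf u}_k$. With this explicit formula in hand, the final step is to compare $V_kV_k^{\rm H}{\bf v}$ with ${\bf v}$. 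Using ${\bf u}_k\in {\rm span}\{V_k\}$ (and therefore $V_kV_k^{\rm H}{\bf u}_k={\bf u}_k$), the identity $V_kV_k^{\rm H}{\bf v}={\bf v}$ collapses to $(A-\widetilde{\lambda}_kI)^{-1}W\boldsymbol{\mu}\in {\rm span}\{V_k\}$. Since $A-\widetilde{\lambda}_kI$ is invertible, this is in turn equivalent to $W\boldsymbol{\mu}\in {\rm span}\{(A-\widetilde{\lambda}_kI)V_k\}$, which upon substituting the formula for $\boldsymbol{\mu}$ and negating is exactly (\ref{2.11}).

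The main obstacle is essentially bookkeeping rather than any new idea: care is needed to place two separate nonsingularity facts correctly, namely that $A-\widetilde{\lambda}_kI$ is invertible (used both to invert past $W\boldsymbol{\mu}$ in writing the closed form, and to pass between $(A-\widetilde{\lambda}_kI)^{-1}W\boldsymbol{\mu}\in{\rm span}\{V_k\}$ and $W\boldsymbol{\mu}\in{\rm span}\{(A-\widetilde{\lambda}_kI)V_k\}$), and that $W^{\rm H}(A-\widetilde{\lambda}_kI)^{-1}W$ is invertible (used solely to solve uniquely for $\boldsymbol{\mu}$ via Theorem \ref{Thm12}(i)). Once these two invertibility statements are placed correctly, no further nontrivial computation remains, and the proof reduces to the algebraic manipulation sketched above.
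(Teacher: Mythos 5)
Your proposal is correct and follows essentially the same route as the paper: both derive the closed form ${\bf v}=-{\bf u}_k+(A-\widetilde{\lambda}_kI)^{-1}W\boldsymbol{\mu}$ (your $\boldsymbol{\mu}$ coincides with the paper's ${\bf g}_k=W^{\rm H}(A-\widetilde{\lambda}_kI){\bf v}$, since $W^{\rm H}(A-\widetilde{\lambda}_kI)^{-1}{\bf r}_k=W^{\rm H}{\bf u}_k$), solve for the coefficient vector via the nonsingularity of $W^{\rm H}(A-\widetilde{\lambda}_kI)^{-1}W$ guaranteed by Theorem \ref{Thm12}(i), and then reduce $V_kV_k^{\rm H}{\bf v}={\bf v}$ to a membership condition in ${\rm span}\{(A-\widetilde{\lambda}_kI)V_k\}$. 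The only difference is cosmetic bookkeeping, and your explicit citation of Theorem \ref{Thm12}(i) is if anything slightly cleaner than the paper's implicit use of it.
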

\begin{proof}
If (\ref{eqn23}) has a unique solution, then $(I-WW^{\rm H})(A-\widetilde{\lambda}_kI){\bf v}=-{\bf r}_k$ and
\begin{equation}\label{2.12}
{\bf v}=-(A-\widetilde{\lambda}_kI)^{-1}{\bf r}_k+(A-\widetilde{\lambda}_kI)^{-1}W\big[W^{\rm H}(A-\widetilde{\lambda}_kI){\bf v}\big].
\end{equation}
Denote ${\bf g}_k=W^{\rm H}(A-\widetilde{\lambda}_kI){\bf v}$. As ${\bf v}\bot{\rm span}\{W\}$, we have from (\ref{2.12}) that
$$
-W^{\rm H}(A-\widetilde{\lambda}_kI)^{-1}{\bf r}_k+W^{\rm H}(A-\widetilde{\lambda}_kI)^{-1}W{\bf g}_k={\bf 0},
$$
and
\begin{equation}\label{2.13}
{\bf g}_k=\big[W^{\rm H}(A-\widetilde{\lambda}_kI)^{-1}W\big]^{-1}W^{\rm H}(A-\widetilde{\lambda}_kI)^{-1}{\bf r}_k.
\end{equation}
Thus, it follows from (\ref{2.12}) and (\ref{2.13}) that
\begin{eqnarray*}
{\bf v}&=&-(A-\widetilde{\lambda}_kI)^{-1}{\bf r}_k+(A-\widetilde{\lambda}_kI)^{-1}W\big[W^{\rm H}(A-\widetilde{\lambda}_kI)^{-1}W\big]^{-1}W^{\rm H}(A-\widetilde{\lambda}_kI)^{-1}{\bf r}_k\\
&=&-{\bf u}_k+(A-\widetilde{\lambda}_kI)^{-1}W\big[W^{\rm H}(A-\widetilde{\lambda}_kI)^{-1}W\big]^{-1}W^{\rm H}(A-\widetilde{\lambda}_kI)^{-1}{\bf r}_k.
\end{eqnarray*}
Let ${\bf f}_k=(A-\widetilde{\lambda}_kI)^{-1}W\big[W^{\rm H}(A-\widetilde{\lambda}_kI)^{-1}W\big]^{-1}W^{\rm H}(A-\widetilde{\lambda}_kI)^{-1}{\bf r}_k$, then
$$
V_kV_k^{\rm H}{\bf v}=-{\bf u}_k+V_kV_k^{\rm H}{\bf f}_k,
$$
and $V_kV_k^{\rm H}{\bf v}={\bf v}$ if and only if $V_kV_k^{\rm H}{\bf f}_k={\bf f}_k$. That is,
$$
(A-\widetilde{\lambda}_kI)^{-1}W\big[W^{\rm H}(A-\widetilde{\lambda}_kI)^{-1}W\big]^{-1}W^{\rm H}(A-\widetilde{\lambda}_kI)^{-1}{\bf r}_k\in{\rm span}\{V_k\},
$$
from which we get (\ref{2.11}).
\end{proof}

\begin{rem}
Theorem \ref{Thm2.6} shows that the Jacobi-Davidson method based on {\rm(}\ref{eqn23}{\rm)} may still suffer from the difficulty of stagnation, provided ${\bf u}_k\in\mathcal{W}\subset{\rm span}\{V_k\}$. On one hand, if $W={\bf u}_k$, then Theorem \ref{Thm2.6} reduces to Theorem \ref{Thm2.5}.
On the other hand, if $W=V_k$,
we have $V_kV_k^{\rm H}{\bf v}=-{\bf u}_k+V_kV_k^{\rm H}{\bf u}_k={\bf 0}$, and {\rm(}\ref{eqn24}{\rm)} can circumvent the difficulty of stagnation \cite{GS}.
\end{rem}

Next, we take into account stagnation of the two-sided Jacobi-Davidson method based on the alternative correction equations (\ref{1.4})--(\ref{1.7}). Using the same trick as Theorem \ref{Thm2.5}, we can prove the following result. 
\begin{theorem}
Under the notation of Section I, for the two-sided and alternating Jacobi-Davidson correction equations, we have that\\
{\rm(i)}~ Suppose that the two-sided Jacobi-Davidson correction equations {\rm(}\ref{1.4}{\rm)} and {\rm(}\ref{1.5}{\rm)} have unique solutions, then
\begin{equation}
{\bf s}~\notin~\mathcal{Q}\Longleftrightarrow (A-\theta I)^{-1}{\bf q}\neq QP^{\rm H}(A-\theta I)^{-1}{\bf q}
\end{equation}
and
\begin{equation}
{\bf t}~\notin~\mathcal{P}\Longleftrightarrow (A^{\rm H}-\bar{\theta} I)^{-1}{\bf p}\neq PQ^{\rm H}(A^{\rm H}-\bar{\theta} I)^{-1}{\bf p},
\end{equation}
where $QP^{\rm H}$ is the oblique projector onto $\mathcal{Q}$ and orthogonal to $\mathcal{P}$, and $PQ^{\rm H}$ is the oblique projector onto $\mathcal{P}$ and orthogonal to $\mathcal{Q}$.\\
{\rm(ii)}~ Suppose that the two-sided Jacobi-Davidson correction equations {\rm(}\ref{1.6}{\rm)} and {\rm(}\ref{1.7}{\rm)} have unique solutions, then
\begin{equation}
{\bf s}~\notin~\mathcal{Q}\Longleftrightarrow (A-\theta I)^{-1}{\bf q}\neq QQ^{\rm H}(A-\theta I)^{-1}{\bf q}
\end{equation}
and
\begin{equation}
{\bf t}~\notin~\mathcal{P}\Longleftrightarrow (A^{\rm H}-\bar{\theta} I)^{-1}{\bf p}\neq PP^{\rm H}(A^{\rm H}-\bar{\theta} I)^{-1}{\bf p},
\end{equation}
where $QQ^{\rm H}$ and $PP^{\rm H}$ are the orthogonal projectors onto $\mathcal{Q}$ and $\mathcal{P}$, respectively.
\end{theorem}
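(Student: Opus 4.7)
The plan is to mimic the proof of Theorem \ref{Thm2.5}: solve each correction equation in closed form, and then characterize membership of the resulting vector in the relevant subspace by applying the appropriate projector.

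Consider (\ref{1.4}) first. Since ${\bf s}\bot{\bf p}$, the inner projector satisfies $(I-{\bf q}{\bf p}^{\rm H}){\bf s}={\bf s}$, so the equation collapses to $(I-{\bf q}{\bf p}^{\rm H})(A-\theta I){\bf s}=-{\bf r}_q$. As $\theta$ is not an eigenvalue of $A$ and ${\bf r}_q=(A-\theta I){\bf q}$, inverting $A-\theta I$ gives
$$
{\bf s}=\gamma(A-\theta I)^{-1}{\bf q}-{\bf q},\qquad \gamma={\bf p}^{\rm H}(A-\theta I){\bf s}\in\mathbb{C}.
$$
Imposing ${\bf p}^{\rm H}{\bf s}=0$ then pins down $\gamma=1/\big[{\bf p}^{\rm H}(A-\theta I)^{-1}{\bf q}\big]$, which is finite and nonzero precisely under the uniqueness hypothesis inherited from the preceding theorem. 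Since $Q^{\rm H}P=I$, the map $QP^{\rm H}$ is idempotent with range $\mathcal{Q}$ and kernel $\mathcal{P}^{\bot}$, so ${\bf s}\in\mathcal{Q}$ iff $QP^{\rm H}{\bf s}={\bf s}$; because ${\bf q}\in\mathcal{Q}$ gives $QP^{\rm H}{\bf q}={\bf q}$, cancelling the nonzero $\gamma$ reduces this to $QP^{\rm H}(A-\theta I)^{-1}{\bf q}=(A-\theta I)^{-1}{\bf q}$, and the contrapositive yields the desired equivalence. Equation (\ref{1.5}) is treated symmetrically, replacing $(A,\theta,{\bf q},{\bf p},QP^{\rm H})$ by $(A^{\rm H},\bar{\theta},{\bf p},{\bf q},PQ^{\rm H})$.

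For part (ii) the strategy is nearly identical. In (\ref{1.6}) the constraint ${\bf s}\bot{\bf q}$ together with $\|{\bf q}\|=1$ gives $(I-{\bf q}{\bf q}^{\rm H}){\bf s}={\bf s}$, and the same manipulation again leads to ${\bf s}=\tilde{\gamma}(A-\theta I)^{-1}{\bf q}-{\bf q}$; this time the orthogonality condition ${\bf q}^{\rm H}{\bf s}=0$ forces $\tilde{\gamma}=1/\big[{\bf q}^{\rm H}(A-\theta I)^{-1}{\bf q}\big]$, nonzero by the corresponding uniqueness hypothesis. Since $Q$ has orthonormal columns, $QQ^{\rm H}$ is the orthogonal projector onto $\mathcal{Q}$, so ${\bf s}\in\mathcal{Q}$ iff $QQ^{\rm H}{\bf s}={\bf s}$; cancelling $\tilde{\gamma}$ reduces this to $QQ^{\rm H}(A-\theta I)^{-1}{\bf q}=(A-\theta I)^{-1}{\bf q}$. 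Equation (\ref{1.7}) is handled in the same way, with $A,\theta,{\bf q},QQ^{\rm H}$ replaced by $A^{\rm H},\bar{\theta},{\bf p},PP^{\rm H}$.

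I do not anticipate any genuine obstacle; all four arguments are structurally identical to Theorem \ref{Thm2.5}, with the orthogonal projector $V_kV_k^{\rm H}$ there replaced by the appropriate oblique projector ($QP^{\rm H}$ or $PQ^{\rm H}$) in part (i) and by the appropriate orthogonal projector ($QQ^{\rm H}$ or $PP^{\rm H}$) in part (ii). The one point that really deserves care is verifying the nonvanishing of the scalars $\gamma$ and $\tilde{\gamma}$, which is needed to cancel them in the final step; this is exactly the uniqueness condition inherited from the existence-and-uniqueness theorem for (\ref{1.4})--(\ref{1.7}) proved just above, and it is part of the hypothesis.
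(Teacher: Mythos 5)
Your proposal is correct and is exactly the argument the paper intends: the paper omits the proof, remarking only that it follows ``using the same trick as Theorem \ref{Thm2.5},'' and your derivation (collapse the inner projector via the orthogonality constraint, invert $A-\theta I$ to get ${\bf s}=\gamma(A-\theta I)^{-1}{\bf q}-{\bf q}$, determine $\gamma\neq 0$ from the constraint and the uniqueness hypothesis, then test membership with the appropriate idempotent $QP^{\rm H}$, $PQ^{\rm H}$, $QQ^{\rm H}$ or $PP^{\rm H}$) is precisely that trick carried out. No gaps.
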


Finally, we briefly discuss the issue of preconditioning for the Jacobi-Davidson method. In practical computations, equation (\ref{1.1}) can be solved approximately by selecting some more easily invertible
approximation for the operator $(I-{\bf u}_k{\bf u}_k^{\rm H})(A-\widetilde{\lambda}_k I)(I-{\bf u}_k{\bf u}_k^{\rm H})$,
or by some preconditioned iterative method in practical calculations \cite{Bai,JD2,JD,Stewart,Van}. More precisely, suppose that we have a preconditioner $K$
for the matrix $A-\widetilde{\lambda}_k I$, such that $K^{-1}(A-\widetilde{\lambda}_k I)\approx I$. Instead of (\ref{1.1}), we have to work efficiently with
\begin{equation}\label{eqn244}
(I-{\bf u}_k{\bf u}_k^{\rm H})K(I-{\bf u}_k{\bf u}_k^{\rm H}){\bf z}=(I-{\bf u}_k{\bf u}_k^{\rm H}){\bf y},
\end{equation}
where ${\bf y}=(A-\widetilde{\lambda}_k I){\bf w}$ and ${\bf w}$ is supplied by the Krylov solver. We point out that Corollary \ref{Thm1} and Theorems \ref{Thm2}--\ref{Cor3.1} also apply to (\ref{eqn244}). In other words, (\ref{eqn244}) may be inconsistent, and the Jacobi-Davidson method with preconditioning may also suffer from stagnation.

\section{Conclusion}
\setcounter{equation}{0}

In this paper, we investigate the correction equation and alternative correction equations of the Jacobi-Davidson type methods for large eigenproblems. We show that
the Jacobi-Davidson correction equation (\ref{1.1}) either has a unique solution or has no solution, and provide a computable necessary and sufficient condition for the solution being unique.
We point out that (\ref{1.1}) may suffer from the difficulty of stagnation, and prove that if the Jacobi-Davidson method stagnates, then the Ritz value is a defective eigenvalue of the projection matrix. A necessary and sufficient condition for expanding the search subspace successfully is presented.

For the alternative correction equation (\ref{eqn23}), we show that it may have a unique solution, infinite number of solutions, and no solution. Moreover, (\ref{eqn23}) can suffer from stagnation provided ${\rm span}\{W\}\neq {\rm span}\{V_k\}$. We conclude that the two-sided Jacobi-Davidson correction equations (\ref{1.4})--(\ref{1.7}) may have a unique solution or have no solution, and the two-sided Jacobi-Davidson method may also stagnates in practice. Therefore, how to seek new correction equations that can cure the above drawbacks is an interesting topic, and deserves further investigation.




\section*{Acknowledgments}
We thank Prof. Zhong-zhi Bai for help discussions.

{\small

}

\begin{thebibliography}{99}

\bibitem{Bai} {\sc Z. Bai, J. Demmel, J. Dongarra, A. Ruhe and H. van der Vorst eds.}, {\em Templates for the Solution of Algebraic Eigenvalue Problems: A Practical Guide}, SIAM, 2000.



\bibitem{Feng} {\sc S. Feng, Z. Jia}, {\em A refined Jacobi-Davidson method and its correction equation}, Comput. Math. Appl., 49 (2005), pp.417--427.

\bibitem{Fro} {\sc D. R. Fokkema, G. L. G. Sleijpen and H. A. van der Vorst},
{\em Jacobi--Davidson style QR and QZ algorithms for the reduction of matrix pencils},
SIAM J. Sci. Comput., 20 (1998), pp. 94--125.

\bibitem{GS} {\sc M. Genseberger and G. L. G. Sleijpen}, {\em Alternative correction equations in the Jacobi-Davidson method}, Numer. Linear Algebra Appl.,
6 (1999), pp.235--253.

\bibitem{TS} {\sc M. Hochstenbach, and G. L. G. Sleijpen}, {\em Two-sided and alternating Jacobi-Davidson}, Linear Algebra Appl., 358 (2003), pp.145--172.

\bibitem{Huang} {\sc Y. Huang, T. Huang, W. Lin and W. Wang}, {\em
A null space free Jacobi-Davidson iteration for Maxwell's operator}, SIAM J. Sci. Comput., 37 (2015), pp.A1--A29.

\bibitem{Jia} {\sc Z. Jia, C. Li}, {\em Inner iterations in the shift-invert residual Arnoldi method and the Jacobi-Davidson method}, Science China Mathematics, 57 (2014), pp.1733--1752.

\bibitem{Mee} {\sc K. Meerbergen, C. Schr\"{o}der and H. Voss}, {\em A Jacobi-Davidson method for two-real-parameter nonlinear eigenvalue problems arising from delay-differential equations}, Numer. Linear Algebra Appl., 20 (2013), pp.852--868.

\bibitem{JD2} {\sc G. L. G. Sleijpen, H. A. van der Vorst}, {\em A Jacobi-Davidson iteration method for linear eigenvalue
problems}, SIAM J. Matrix Anal. Appl., 17 (1996), pp.401--425.

\bibitem{JD} {\sc G. L. G. Sleijpen, H. A. van der Vorst}, {\em A Jacobi-Davidson iteration method for linear eigenvalue
problems}, SIAM Rev., 42 (2000), pp.267--293. 

\bibitem{S1} {\sc G. L. G. Sleijpen, H. A. van der Vorst and E. Meijerink},
{\em Efficient expansion of subspaces in the Jacobi-Davidson method
for standard and generalized eigenproblems},
Elect. Tran. Numer. Anal., 7 (1998), pp.75--89.

\bibitem{S2} {\sc G. L. G. Sleijpen,
A. G. L. Booten, Diederik R. Fokkema and H. A. van der Vorst},
{\em Jacobi-davidson type methods for generalized eigenproblems and polynomial eigenproblems},
BIT, 36 (1996), pp.595--633.

\bibitem{Stewart} {\sc G. W. Stewart}, {\em Matrix Algorithms II: Eigensystems}, SIAM, Philadelphia, 2001.

\bibitem{Van} {\sc H. A. van der Vorst}, {\em Computational Methods for large Eigenvalue Problems},
in P.G. Ciarlet and J.L. Lions (eds), Handbook of Numerical Analysis, Volume VIII, North-Holland (Elsevier), Amsterdam, 2002,
pp.3--179.

\bibitem{Wu} {\sc G. Wu, L. Zhang}, {\em On expansion of search subspaces for large
non-Hermitian eigenproblems}, Linear Algebra Appl., 454 (2014), pp.107--129.

\bibitem{Ye} {\sc Q. Ye}, {\em Optimal expansion of subspaces for eigenvector approximations}, Linear Algebra Appl., 428
(2008), pp.911--918.

\end{thebibliography}
\end{document}